\documentclass[reqno,11pt]{article}

\usepackage[a4paper,left=27mm,right=27mm,top=27mm,bottom=27mm]{geometry}
\usepackage{amsmath,amssymb,amsfonts,amsthm,mathtools}
\usepackage[svgnames]{xcolor}
\usepackage[colorlinks=true,linkcolor=DarkRed,citecolor=NavyBlue,
            urlcolor=NavyBlue]{hyperref}

\numberwithin{equation}{section}

\theoremstyle{definition}
\newtheorem{definition}{Definition}[section]
\newtheorem{problem}[definition]{Problem}
\theoremstyle{plain}
\newtheorem{proposition}[definition]{Proposition}
\newtheorem{corollary}[definition]{Corollary}
\newtheorem{conj}[definition]{Conjecture}
\theoremstyle{remark}
\newtheorem{remark}[definition]{Remark}
\usepackage{graphics,graphicx}

\graphicspath{{./pics/}}

\newcommand{\Bcal}{\mathcal B}
\newcommand{\Mcal}{\mathcal M}
\newcommand{\Haus}{\mathcal H}
\newcommand{\Amin}{A_{\min}}

\title{Dangling points in area-minimizing Meissner polyhedra}
\author{Beniamin Bogosel}
\date{\today}

\begin{document}
\maketitle

\begin{abstract}
Meissner polyhedra are constant-width bodies obtained from extremal finite
sets of unit diameter.  Such a generating set may contain dangling points,
namely points having exactly two diametric neighbors.  This article studies
whether these points can play an essential role in surface-area
minimization.  Given an extremal set we show that the
smallest surface area among the Meissner polyhedra based on it cannot increase by deleting a dangling point. Adding a dangling point cannot decrease the smallest achievable surface area. 
This reduces the search for area-minimizing Meissner polyhedra to
generating sets without dangling points.
\end{abstract}

{\bf AMS Classification:} 49Q10, 52A40

{\bf Keywords:} Meissner polyhedron, shape optimization, constant width

\section{Introduction}

A convex body $K\subset\mathbb R^3$ has \emph{constant width one} if the
distance between any two parallel supporting planes of $K$ is equal to one.
The three-dimensional Blaschke--Lebesgue problem asks for the least possible
volume of such a body.  The classical conjecture, generally attributed to
Bonnesen and Fenchel, asserts that the minimum is attained by the Meissner
tetrahedra; see
\cite{Bonnesen-Fenchel,Meissner-Schilling,kawohl-webe}.  This remains one of
the central open problems concerning bodies of constant width.  Numerical
shape optimization gives additional evidence for the conjectured
minimizers; see, for example, \cite{AntunesBogosel22}.

For a body $K\subset\mathbb R^3$ of constant width one, Blaschke's relation
links its volume and surface area by \cite{Bonnesen-Fenchel,hynd-vol-per}
\begin{equation}\label{eq:blaschke-relation}
 |K|=\frac12\Haus^2(\partial K)-\frac{\pi}{3}.
\end{equation}
Consequently, volume minimization and surface-area minimization are
equivalent in this class.  Meissner polyhedra form a particularly important
finite-dimensional family of constant-width bodies.  Their construction was
developed in \cite{montejano} and revisited in \cite{meissner_hynd}, where
their density in the class of constant-width bodies was established.  Their
surface area and volume admit explicit expressions in terms of pairs of dual
edges; see \cite{bogosel_Meissner,hynd-vol-per}.  This converts part of the
three-dimensional Blaschke--Lebesgue problem into a collection of finite
geometric optimization problems.

We recall the finite configurations underlying this construction.  Let
$X\subset\mathbb R^3$ be a finite set with $m\geq4$ elements and define
\begin{equation}\label{eq:diameter-one}
 \operatorname{diam}X=\max\{|p-q|:p,q\in X\}.
\end{equation}
A pair $\{p,q\}\subset X$ is called \emph{diametric} if
$|p-q|=\operatorname{diam}X$.  After a scaling, we shall always assume that
$\operatorname{diam}X=1$.  Denote the number of diametric pairs by
\begin{equation}\label{eq:diametric-count}
 e(X)=\#\bigl\{\{p,q\}\subset X:|p-q|=1\bigr\}.
\end{equation}
The solution of the V\'{a}zsonyi problem in three dimensions states that
$e(X)\leq2m-2$.  The equality case has a rich ball-polyhedral structure;
see \cite{disk-polygons,meissner_hynd}.

\begin{definition}[extremal finite set]\label{def:extremal-set}
 A finite set $X\subset\mathbb R^3$ with $m\geq4$ elements is an
 \emph{extremal finite set of unit diameter} if
 \begin{equation}\label{eq:extremal-count}
  \operatorname{diam}X=1
  \qquad\text{and}\qquad
  e(X)=2m-2.
 \end{equation}
 Equivalently, $X$ realizes the largest possible number of diametric pairs
 among $m$-point subsets of $\mathbb R^3$ of diameter one.
\end{definition}

For any nonempty set $S\subset\mathbb R^3$, put
\begin{equation}\label{eq:ball-polyhedron}
 \Bcal(S)=\bigcap_{p\in S}\overline B(p,1).
\end{equation}
For finite $X$, this is the ball polyhedron associated with $X$.
When $X$ is extremal, its points are precisely the vertices of
$\Bcal(X)$, with vertices and faces related by the standard ball-polyhedral
duality.  In particular, the edges of $\Bcal(X)$ are grouped into $m-1$
dual pairs
\begin{equation}\label{eq:dual-pairs}
 (e_1,e_1'),\ldots,(e_{m-1},e_{m-1}').
\end{equation}
Set $e_i^0=e_i$ and $e_i^1=e_i'$.  For
$\varepsilon=(\varepsilon_1,\ldots,\varepsilon_{m-1})
\in\{0,1\}^{m-1}$, define
\begin{equation}\label{eq:meissner-choice}
 M_\varepsilon(X)=
 \Bcal\left(X\cup\bigcup_{i=1}^{m-1}e_i^{\varepsilon_i}\right).
\end{equation}
Choosing the centers on one edge in each dual pair performs the corresponding
Meissner smoothing on its dual edge.  Every body
$M_\varepsilon(X)$ has constant width one.  We denote by $\Mcal(X)$ the
finite family of all such Meissner polyhedra; different binary choices need
not always give distinct bodies.  The relevant structural facts and the
constant-width property are proved in
\cite{montejano,meissner_hynd}; the dependence of surface area on the dual
edge pairs is described in \cite{bogosel_Meissner}.

Not every vertex of an extremal set has the same local role.  For
$x\in X$, define its diametric valence by
\begin{equation}\label{eq:diametric-valence}
 \operatorname{val}_X(x)
 =\#\{p\in X\setminus\{x\}:|p-x|=1\}.
\end{equation}

\begin{definition}[dangling point]\label{def:dangling-point}
 Let $X$ be an extremal finite set of unit diameter.  A point $x\in X$ is
 called a \emph{dangling point} if
 \begin{equation}\label{eq:dangling-valence}
  \operatorname{val}_X(x)=2.
 \end{equation}
 In the ball polyhedron $\Bcal(X)$, such a point belongs to exactly two
 faces.  After $x$ is deleted from the generating set, it lies in the
 relative interior of a circular edge of $\Bcal(X\setminus\{x\})$; see
 \cite[Section~3]{meissner_hynd} and Figure~\ref{fig:dangling-diam}.
\end{definition}

\begin{figure}
	\centering
	\includegraphics[width=0.35\textwidth]{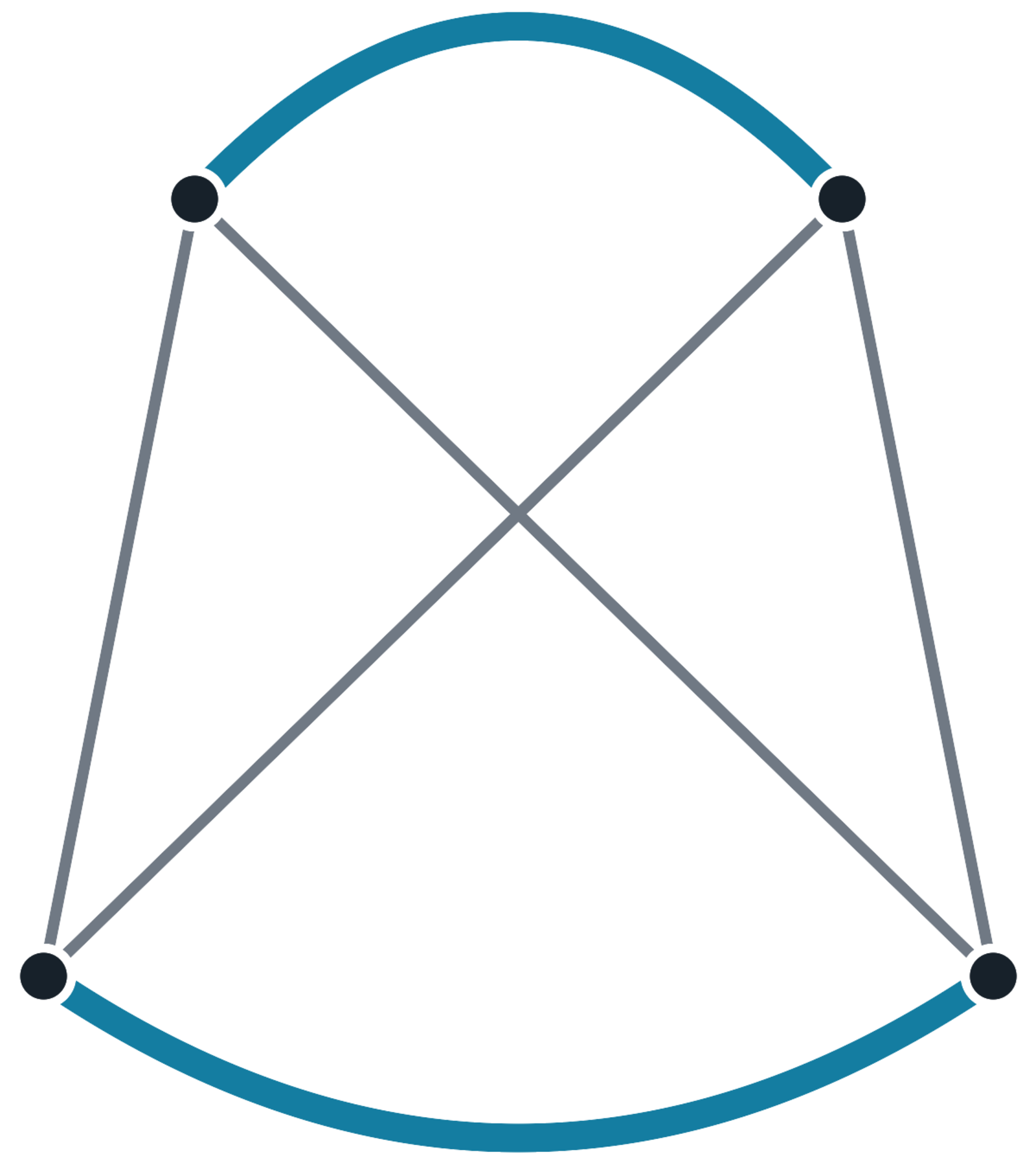} \qquad 
	\includegraphics[width=0.35\textwidth]{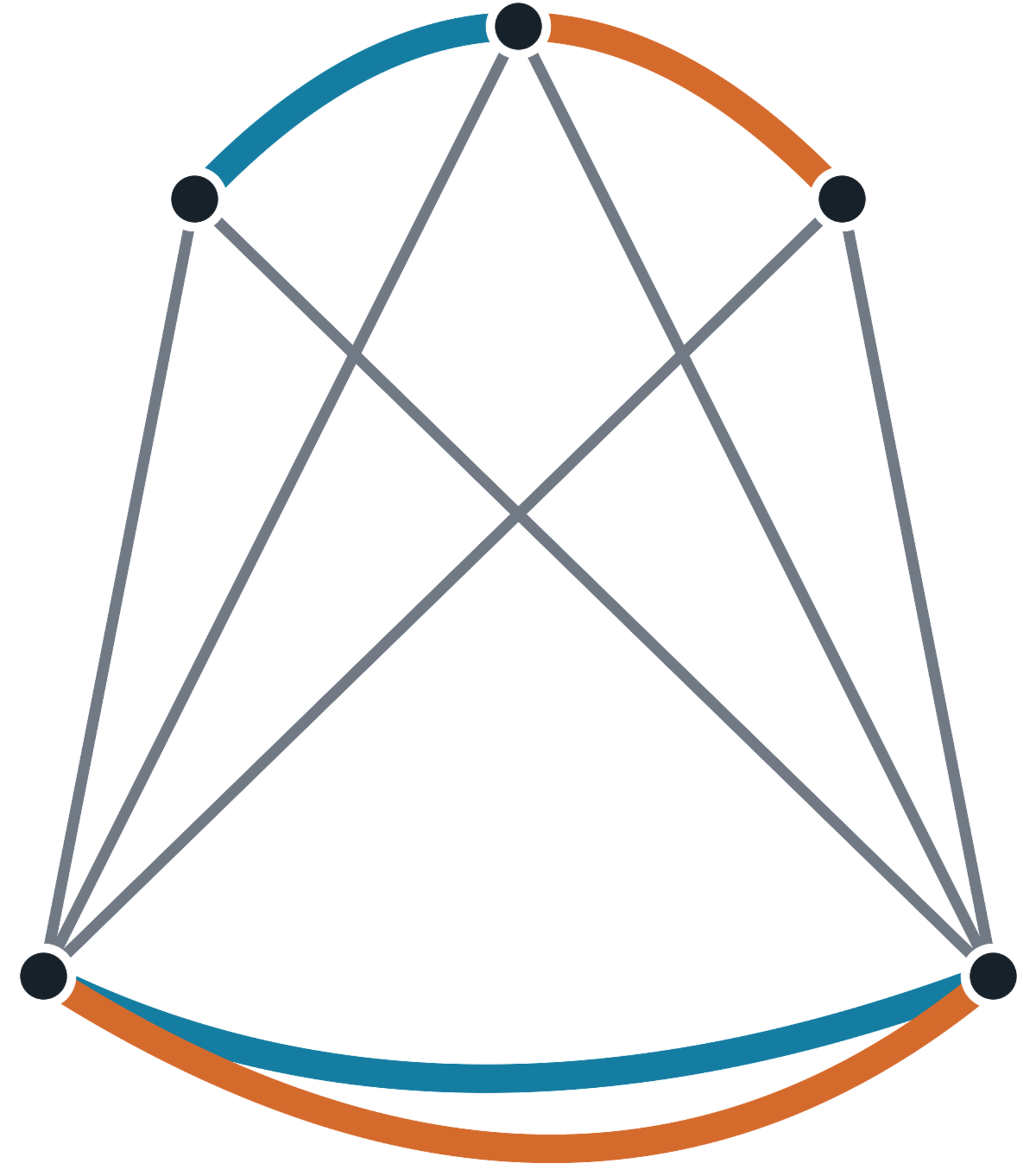}
	\caption{A generic dual edge pair, with the four diameters joining its
	endpoints (left).  After a dangling point is inserted on one edge, the
	original pair is replaced by two dual pairs (right); arcs with the same
	color are dual.}
	\label{fig:dangling-diam}
\end{figure}

If $x$ is dangling, deleting it removes exactly two diametric pairs.  Hence
$X\setminus\{x\}$ has $m-1$ points and
\begin{equation}\label{eq:deletion-extremal-count}
 e(X\setminus\{x\})=(2m-2)-2=2(m-1)-2.
\end{equation}
Thus deletion preserves extremality.  Geometrically, the dangling point
subdivides one circular edge of the smaller ball polyhedron, and the local
change is accompanied by the corresponding subdivision in the dual-edge
description.  This makes dangling deletion a natural elementary operation
on generating sets.

For an extremal set $X$, define
\begin{equation}\label{eq:Amin-definition}
 \Amin(X)
 =\min_{M\in\Mcal(X)}\Haus^2(\partial M).
\end{equation}
The minimum exists because $\Mcal(X)$ is obtained from finitely many
smoothing choices.  Thus $\Amin(X)$ allows the smoothing to be chosen
optimally for the fixed generating set $X$; it is not the area of an
arbitrarily prescribed smoothing.

The guiding principle of this article is that dangling points should play no
essential role in the minimization of area.  More precisely, we address the
following problem.

\begin{problem}[dangling deletion problem]\label{prob:dangling-deletion}
 Let $X_m\subset\mathbb R^3$ be an extremal finite set of unit diameter with
 $m\geq5$ points, and let $x\in X_m$ be a dangling point.  Prove that
 \begin{equation}\label{eq:main-target}
  \boxed{
  \Amin(X_m)\geq
  \Amin\bigl(X_m\setminus\{x\}\bigr).}
 \end{equation}
\end{problem}

The inequality allows dangling points to be removed
successively without increasing the least achievable area.  The search for
an area-minimizing Meissner polyhedron could therefore be reduced to a
dangling-free core of an extremal set, obtained after all dangling points
have been deleted.  By \eqref{eq:blaschke-relation}, the identical reduction
would hold for volume.

It is important to distinguish this reduction from a strict exclusion
statement.  Inequality \eqref{eq:main-target} shows that a dangling point is
unnecessary for attaining a smaller area.  Equality may nevertheless occur,
for example when insertion of the point only subdivides a region on which
the same smoothing is already performed. 

\section{Area comparison under deletion of one dangling point}
\label{sec:dangling-area-comparison}

We now make the comparison in Problem~\ref{prob:dangling-deletion} for one
dangling point.  Let $X=X_m$ be an extremal set, let $x\in X$ be
dangling, and put $Y=X\setminus\{x\}$.  Write $a,b\in Y$ for the two
diametric neighbours of $x$.  Deleting $x$ removes exactly two diametric
pairs, so $Y$ is again extremal.  By the edge--face duality of extremal
ball polyhedra \cite[Section~3]{meissner_hynd}, there are vertices
$y,z\in Y$ such that $x$ lies in the relative interior of the circular
edge $e_{yz}$ of $\Bcal(Y)$.  This edge is dual to an edge $e_{ab}$.
In particular,
\[
 |a-y|=|a-z|=|b-y|=|b-z|=1.
\]
Indeed, the constraints from $a$ and $b$ are the only unit-distance
constraints active at $x$; all the other constraints defining
$\Bcal(Y)$ are strict there.  Thus $x$ is a relative interior point of
the circle arc cut out by $\partial B(a,1)\cap\partial B(b,1)$.
Passing from $Y$ to $X$ replaces the single dual pair
\[
 (e_{yz},e_{ab})
\]
by the two dual pairs
\[
 (e_{yx},e_{ab}^{\,1}),
 \qquad
 (e_{xz},e_{ab}^{\,2}).
\]
The two edges $e_{ab}^{\,1}$ and $e_{ab}^{\,2}$ are the two sides of
the spherical digon dual to $x$.  They have the same endpoints and the
same angular length, although they need not be the same circular arc.  Every
other dual pair is unchanged.  Indeed, in the strongly self-dual face
complex, the valence-two vertex $x$ is dual to this digon.  Deleting $x$
suppresses the subdivision at $x$ and replaces the two sides of the digon
by the single edge $e_{ab}$, without changing any other incidence; see
\cite[Section~3, in particular Theorem~3.6]{meissner_hynd}.  See
Figure~\ref{fig:dangling-options}.

\begin{figure}
	\centering
	\begin{tabular}{cccc}
		\includegraphics[width=0.22\textwidth]{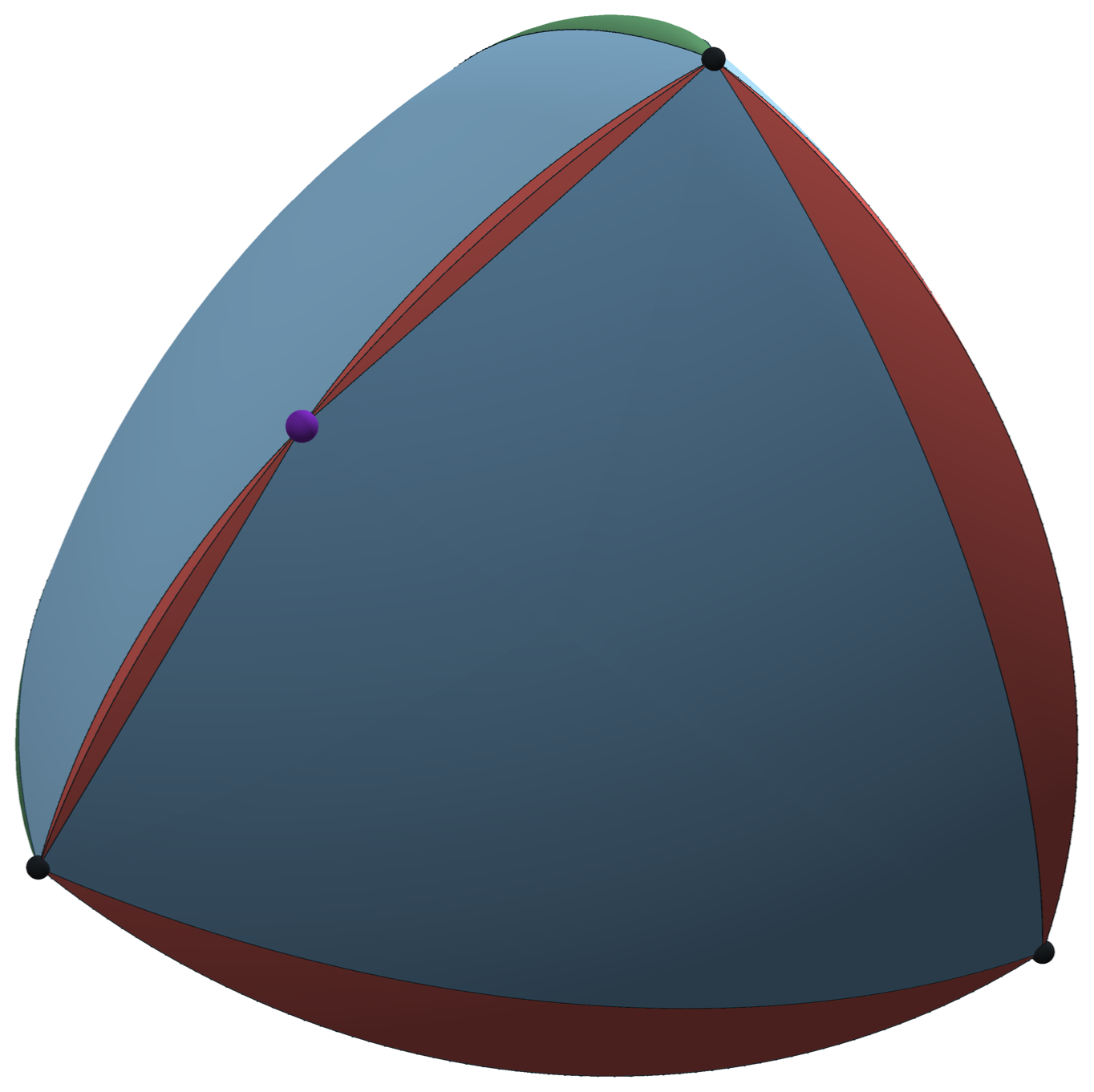} & 
		\includegraphics[width=0.22\textwidth]{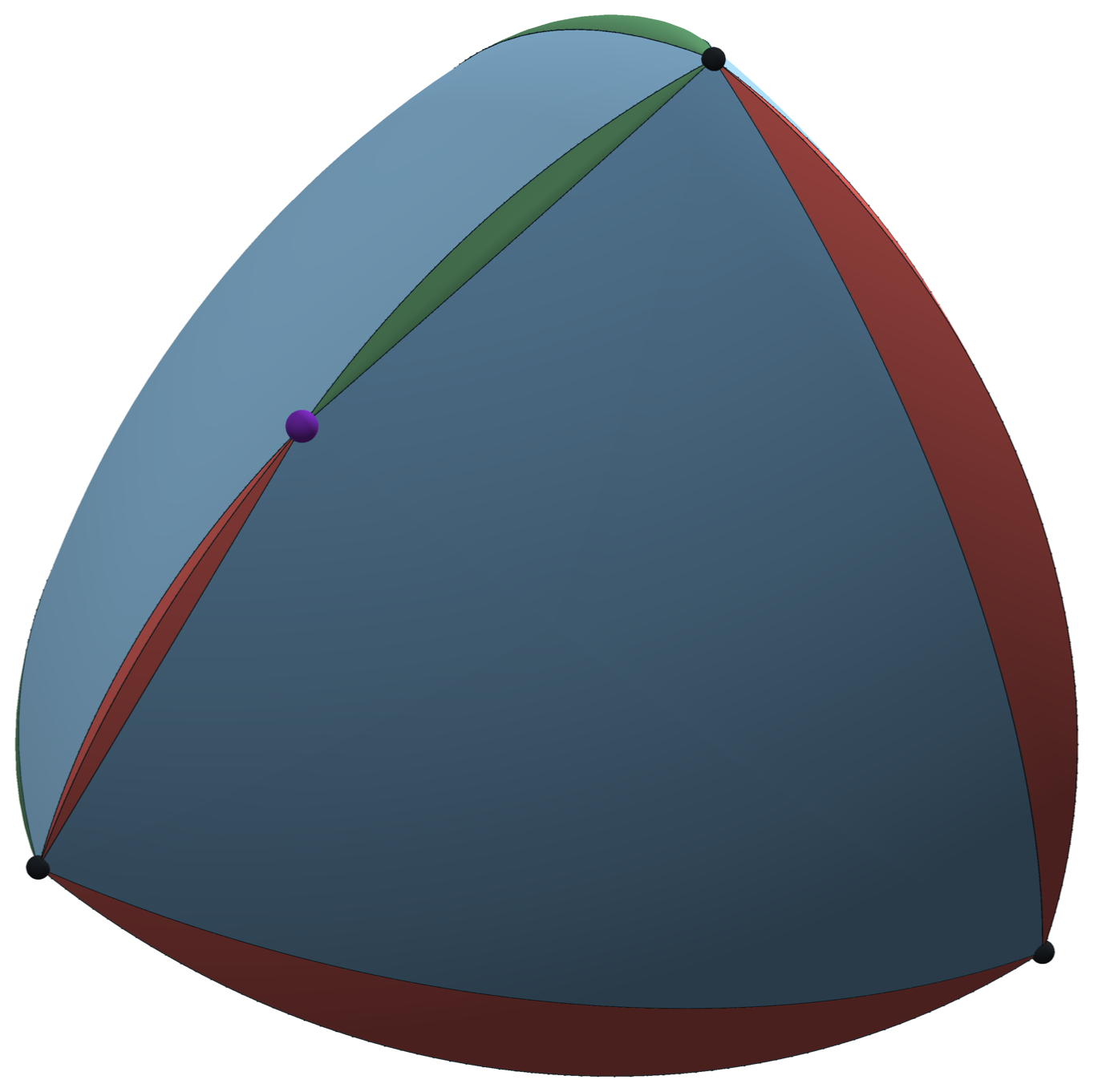} &
		\includegraphics[width=0.22\textwidth]{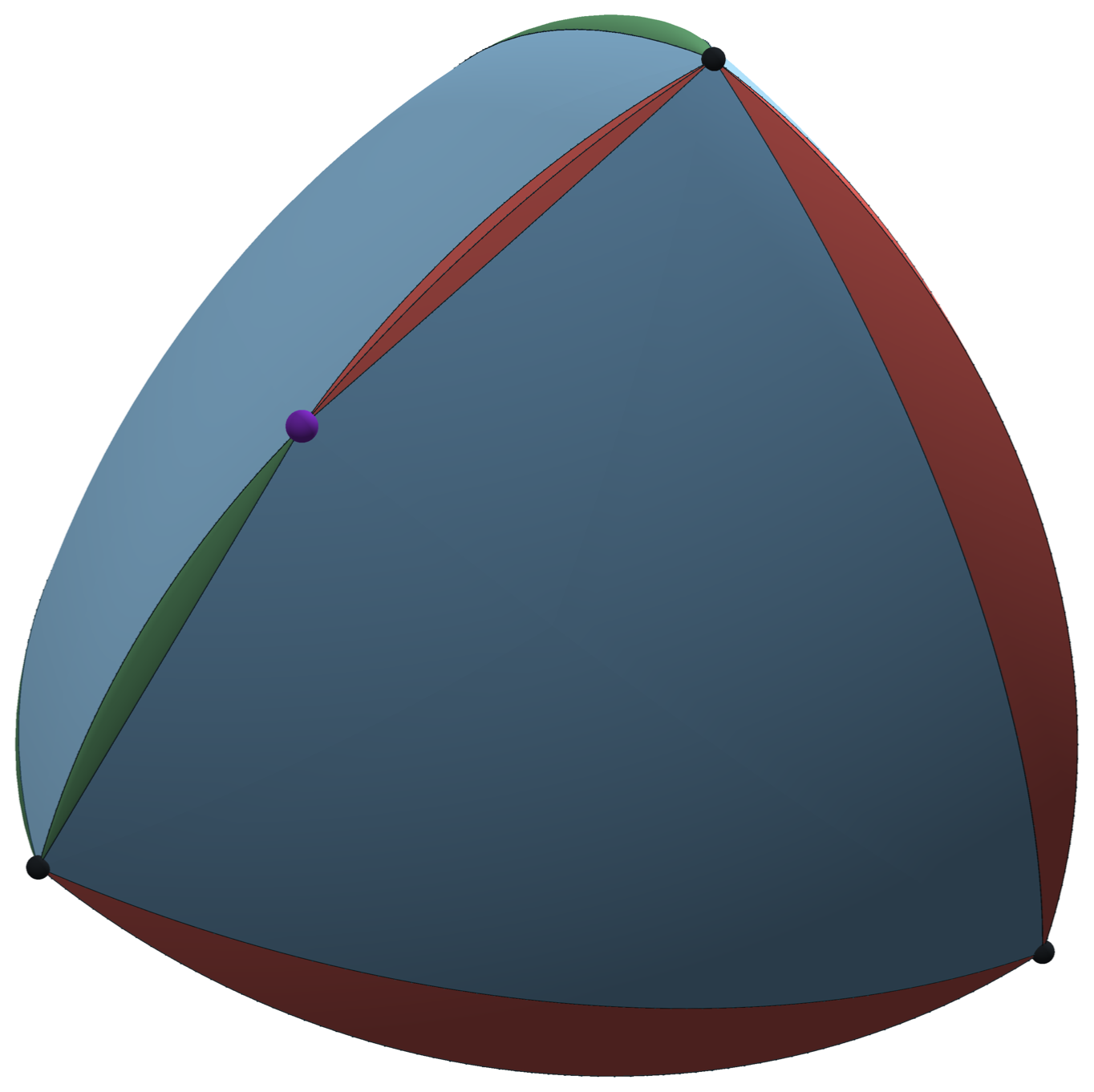} & 
		\includegraphics[width=0.22\textwidth]{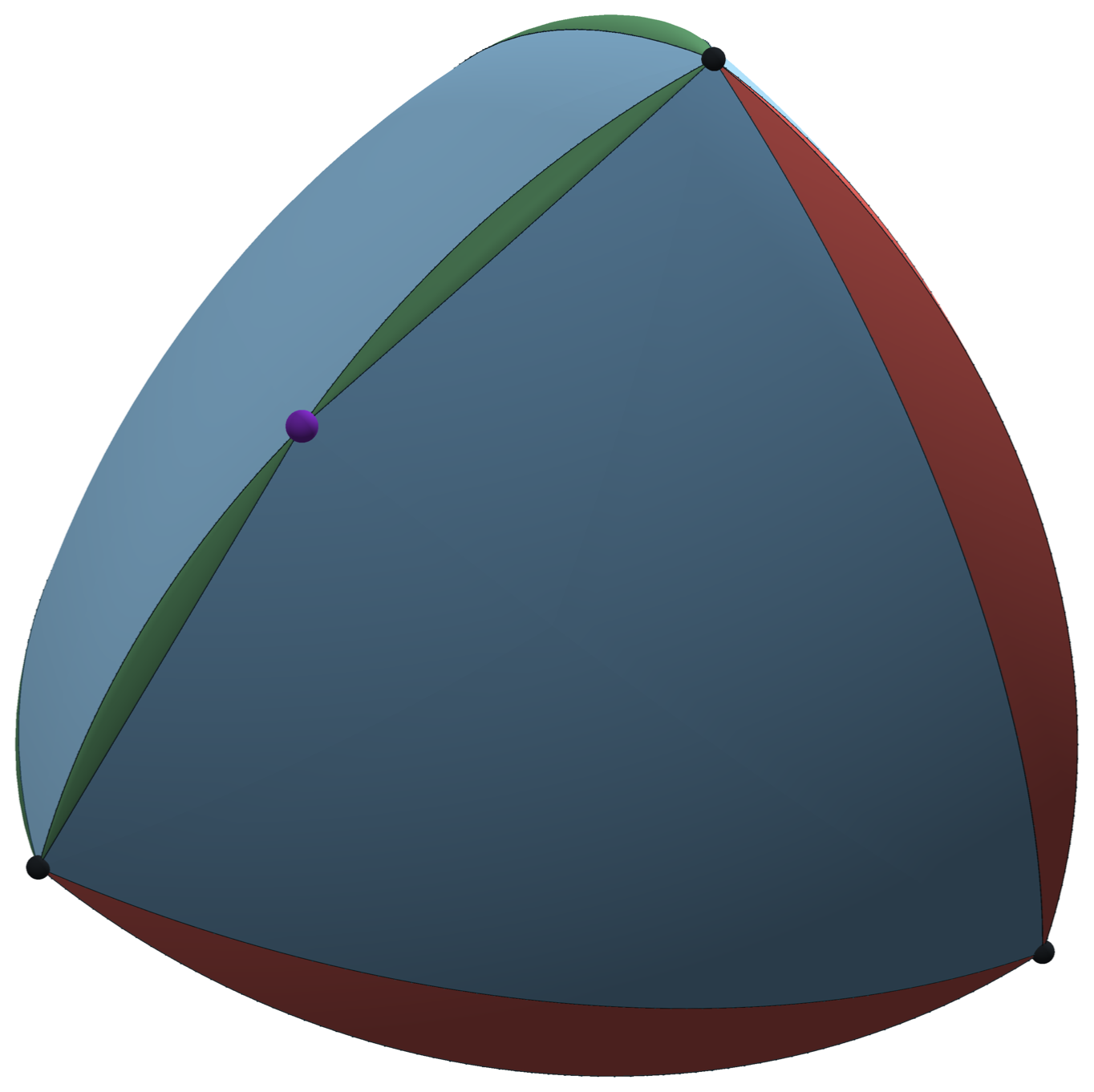} \\
		\includegraphics[width=0.22\textwidth]{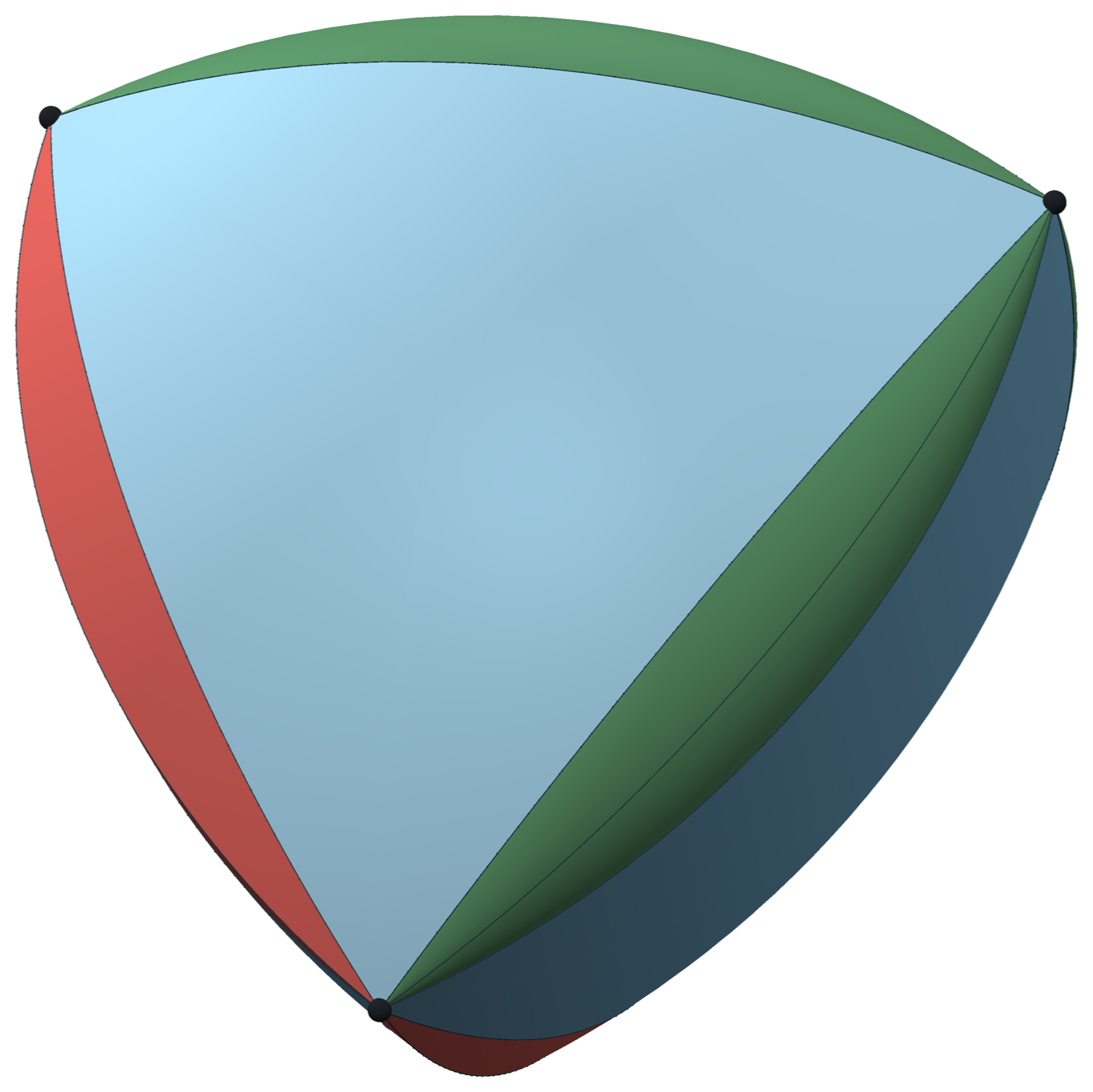} & 
		\includegraphics[width=0.22\textwidth]{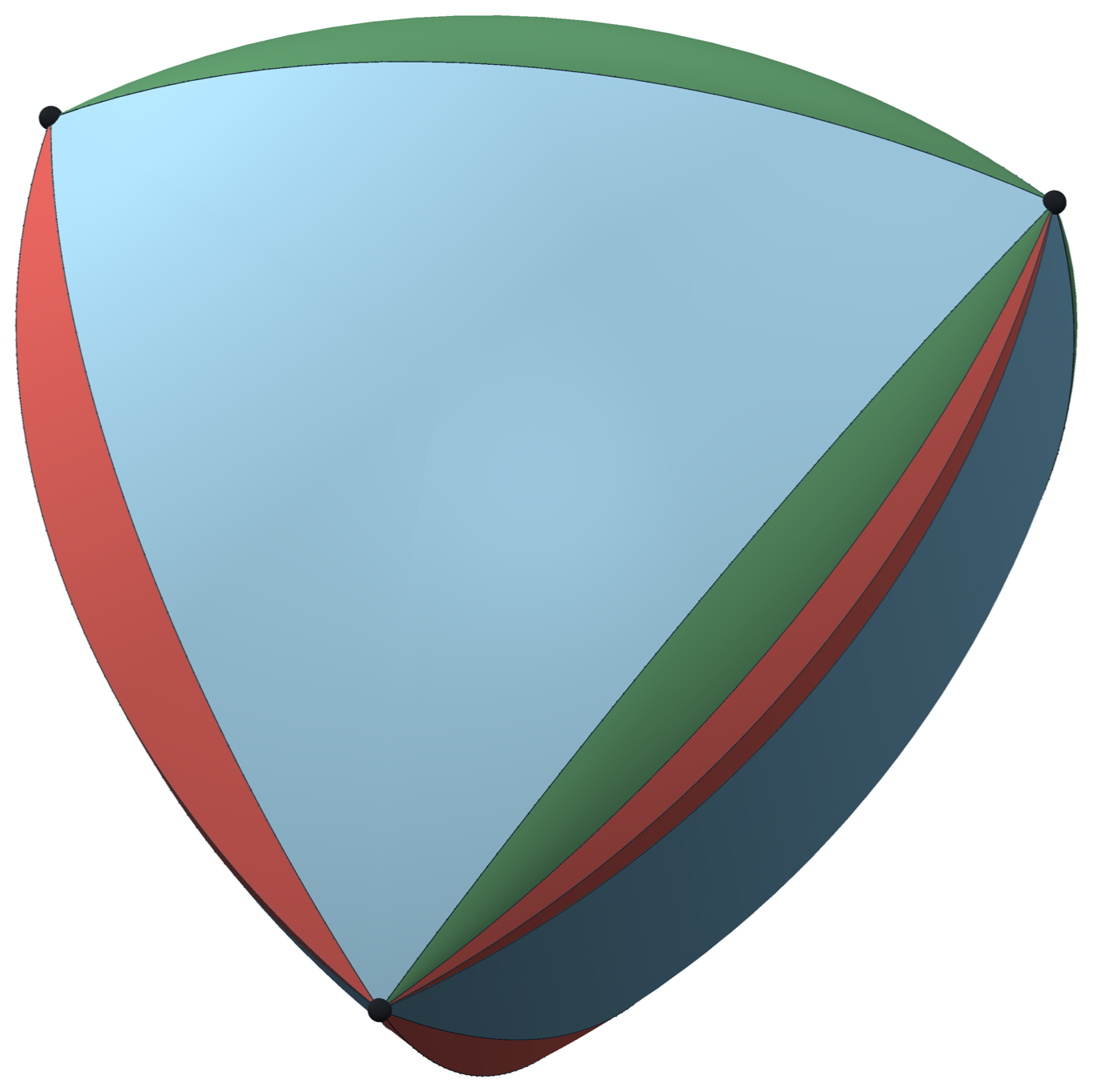} &
		\includegraphics[width=0.22\textwidth]{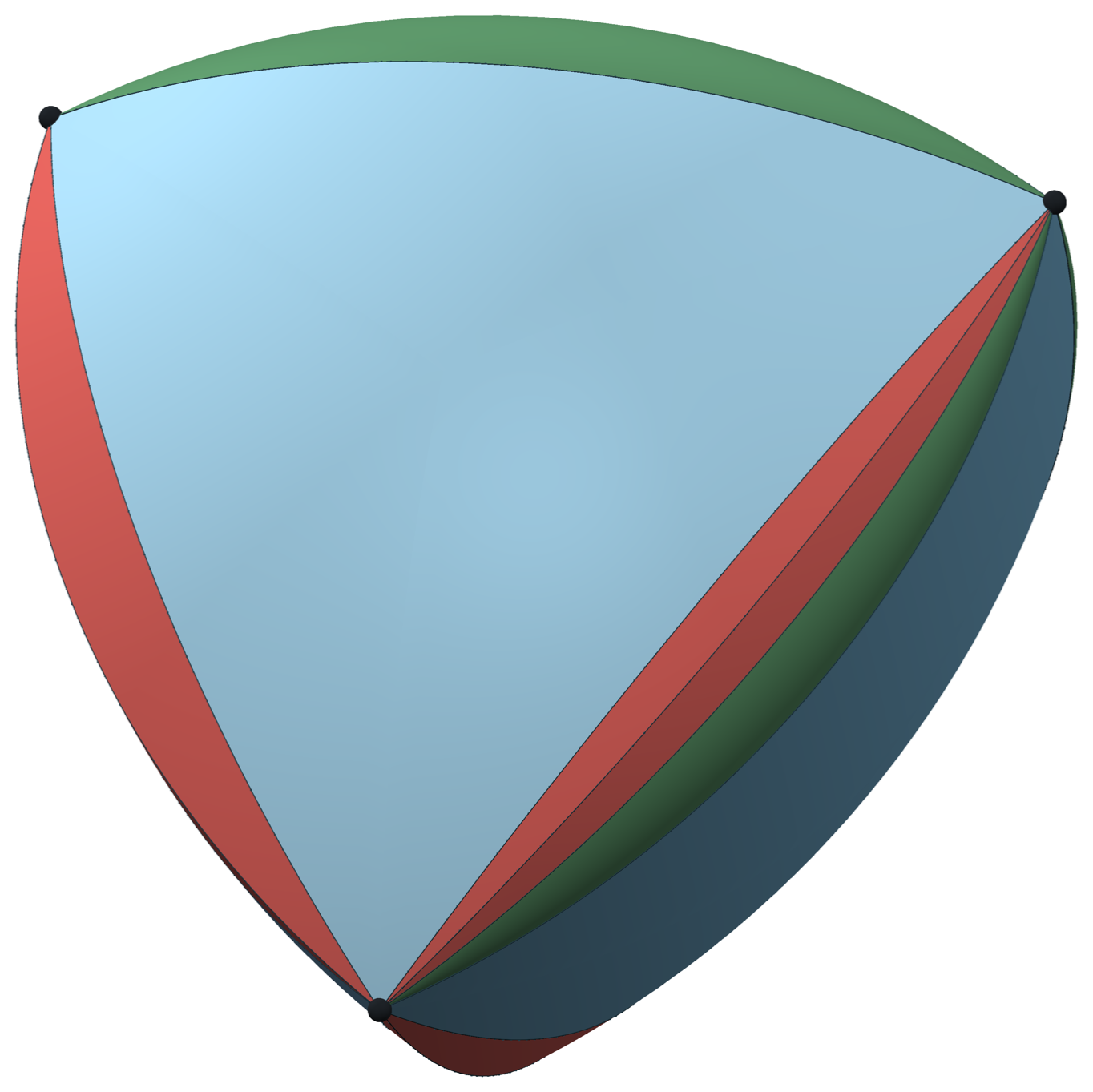} & 
		\includegraphics[width=0.22\textwidth]{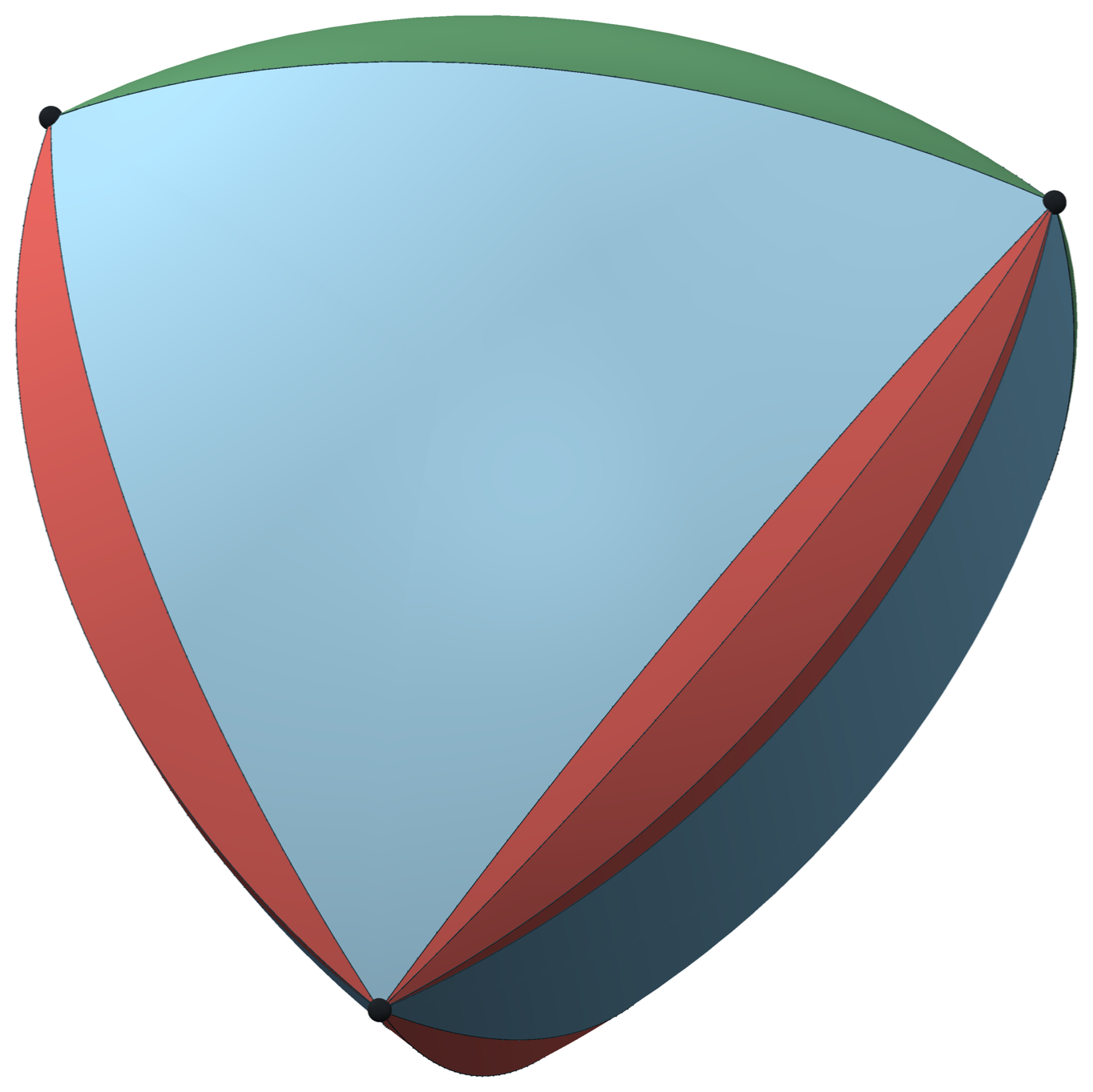} \\
	\end{tabular}
	\caption{A dangling point and its associated four smoothing options for the two pairs of dual edges. Bottom row shows dual edges compared to the first one}
	\label{fig:dangling-options}
\end{figure}

For an edge with endpoints $u,v$, define the spherical length of a geodesic joining $u,v$ on a unit sphere by
\begin{equation}\label{eq:draft-angular-length}
 \theta(uv)=2\arcsin\frac{|u-v|}{2}\in[0,\pi/3].
\end{equation}
Set
\begin{equation}\label{eq:draft-local-angles}
 \alpha=\theta(ab),\qquad
 \beta=\theta(yz),\qquad
 \beta_1=\theta(yx),\qquad
 \beta_2=\theta(xz).
\end{equation}
All these angles are positive and at most $\pi/3$; in particular, no
degenerate edge occurs in the comparisons below.
The surface-area formula for Meissner polyhedra derived in \cite{bogosel_Meissner} is
\begin{equation}\label{eq:draft-bogosel-area}
 \Haus^2(\partial M)
 =2\pi-2\sum_i f\bigl(\theta(e_i),\theta(e_i')\bigr),
\end{equation}
where $e_i$ is chosen as a set of centres, $e_i'$ is consequently
smoothed, and
\begin{equation}\label{eq:draft-f-definition}
 f(s,t)
 =t\cos\frac t2\,
  \arcsin\left(\frac{\sin(s/2)}{\cos(t/2)}\right).
\end{equation}
Since the area is $2\pi$ minus twice the sum in
\eqref{eq:draft-bogosel-area}, minimizing area means maximizing each local
summand.  The comparison proved in \cite[Appendix~A]{bogosel_Meissner} is
\begin{equation}\label{eq:draft-longest-smoothing}
 0\leq s\leq t\leq\frac\pi3
 \quad\Longrightarrow\quad
 f(s,t)\geq f(t,s),
\end{equation}
with strict inequality when $0<s<t$.  Thus the area-minimizing rule is:
choose centres on the shorter edge and smooth the longer edge.  It is useful
to write the optimally chosen local summand as
\begin{equation}\label{eq:draft-H-definition}
 H(s,t)=\max\{f(s,t),f(t,s)\}
 =f\bigl(\min\{s,t\},\max\{s,t\}\bigr).
\end{equation}

Let $Q$ be the sum of the optimally chosen $H$-summands over all dual
pairs unchanged by deleting $x$.  Formula
\eqref{eq:draft-bogosel-area} gives the two exact values
\begin{align}
 \Amin(Y)
 &=2\pi-2\bigl[Q+H(\beta,\alpha)\bigr],
 \label{eq:draft-area-Y}\\
 \Amin(X)
 &=2\pi-2\bigl[Q+H(\beta_1,\alpha)
                         +H(\beta_2,\alpha)\bigr].
 \label{eq:draft-area-X}
\end{align}
We distinguish $\beta\leq\alpha$ from $\beta>\alpha$; the smoothing
ambiguity at equality is recorded separately.

The circle $\partial B(a,1)\cap\partial B(b,1)$ containing
$e_{yx}\cup e_{xz}=e_{yz}$ has radius
\begin{equation}\label{eq:draft-circle-radius}
 r=\cos\frac\alpha2.
\end{equation}
Let $t_1,t_2>0$ be the central angles of $e_{yx},e_{xz}$, and put
$T=t_1+t_2$.  By the standard edge description for extremal ball
polyhedra, these edges are minor circular arcs
\cite[proof of Lemma~4.2]{meissner_hynd}; hence
$0<t_i<T<\pi$.  Then
\begin{equation}\label{eq:draft-psi}
 \beta_i=\psi(t_i),\qquad
 \beta=\psi(T),\qquad
 \psi(t)=2\arcsin\left(r\sin\frac t2\right).
\end{equation}
In particular, $\psi$ is strictly increasing on the relevant range.
Moreover,
\begin{equation}\label{eq:draft-additivity}
 f(\beta_1,\alpha)+f(\beta_2,\alpha)=f(\beta,\alpha).
\end{equation}
Indeed, \eqref{eq:draft-f-definition} and
\eqref{eq:draft-psi} give
\begin{equation}\label{eq:draft-linear-branch}
 f(\psi(t),\alpha)
 =\alpha r\,
  \arcsin\left(
   \frac{\sin(\psi(t)/2)}{r}
 \right)
 =\frac{\alpha r t}{2},
\end{equation}
where the last equality uses $0<t<T<\pi$, so that
$\arcsin(\sin(t/2))=t/2$, and $t_1+t_2=T$.

\begin{proposition}[The original edge is no longer than its dual]
\label{prop:draft-beta-at-most-alpha}
If $\beta\leq\alpha$, then
\begin{equation}\label{eq:draft-case-short-value}
 \Amin(X)=\Amin(Y)
 =2\pi-2\bigl[Q+f(\beta,\alpha)\bigr].
\end{equation}
Compatible minimizing choices produce the same Meissner body.
\end{proposition}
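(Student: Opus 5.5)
The argument reduces to the two area formulas \eqref{eq:draft-area-Y} and \eqref{eq:draft-area-X}. Since $Q$ is common to both, it suffices to show
\[
 H(\beta_1,\alpha)+H(\beta_2,\alpha)=H(\beta,\alpha)=f(\beta,\alpha)
\]
when $\beta\le\alpha$.

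First, because $\beta\le\alpha$, definition \eqref{eq:draft-H-definition} gives $H(\beta,\alpha)=f(\min,\max)=f(\beta,\alpha)$. This already yields the second equality in \eqref{eq:draft-case-short-value} via \eqref{eq:draft-area-Y}.

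Next I compare the sub-arcs with $\beta$. The function $\psi$ in \eqref{eq:draft-psi} is strictly increasing, and $0<t_i<T$, so $\beta_i=\psi(t_i)<\psi(T)=\beta\le\alpha$. Hence $\min\{\beta_i,\alpha\}=\beta_i$, and again $H(\beta_i,\alpha)=f(\beta_i,\alpha)$ for $i=1,2$. The additivity identity \eqref{eq:draft-additivity} then gives $H(\beta_1,\alpha)+H(\beta_2,\alpha)=f(\beta,\alpha)$, and substituting into \eqref{eq:draft-area-X} gives $\Amin(X)=\Amin(Y)$.

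One point needs care: the minimum over $\Mcal$ decouples into independent per-pair maxima of the local summand. This holds because each dual pair's choice $\varepsilon_i$ enters \eqref{eq:draft-bogosel-area} only through its own summand. I take this from the earlier formulas, where it is already implicit in \eqref{eq:draft-area-Y}–\eqref{eq:draft-area-X}.

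For the final claim about compatible minimizing choices, the optimal choice for $X$ on both new pairs places the centres on $e_{yx}$ and $e_{xz}$ and smooths $e_{ab}^1$ and $e_{ab}^2$. For $Y$, it places the centres on $e_{yz}=e_{yx}\cup e_{xz}$ and smooths $e_{ab}$. By strictness in \eqref{eq:draft-longest-smoothing}, this choice is forced when $\beta_i<\alpha$, which always holds. For $Y$ it is forced when $\beta<\alpha$; at $\beta=\alpha$ we take the compatible choice. In both cases the set of centres in \eqref{eq:meissner-choice} is the same union $X\cup\bigcup(\text{other chosen edges})\cup e_{yz}$. The only subtlety is that the generating set for $X$ contains $x$ while that for $Y$ does not, but $x\in e_{yz}$ anyway.

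It remains to see that the unchanged pairs, with identical choices, give identical centre sets. The centre sets $X\cup E$ and $Y\cup E$ coincide as subsets of $\mathbb R^3$ because $x\in e_{yz}\subset E$. The smoothed edges $e_{ab}^1,e_{ab}^2$ versus $e_{ab}$ are not centres, so they do not affect $\Bcal(\cdot)$. Hence the two Meissner bodies are literally equal, and this also gives an independent confirmation of the area equality.

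The main obstacle is checking that the edge families used as centres really agree. This requires the edges of $\Bcal(X)$ other than the $x$-incident ones to coincide with those of $\Bcal(Y)$, with $e_{yx}\cup e_{xz}=e_{yz}$ as sets. The combinatorial description from \cite[Theorem~3.6]{meissner_hynd} recalled above provides this, and I would cite it rather than reprove it.
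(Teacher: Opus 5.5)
Your proposal is correct and follows the paper's proof essentially step for step. You use monotonicity of $\psi$ to get $\beta_i<\beta\le\alpha$, the longest-edge rule to identify each $H$ with the branch having centres on the shorter edge, and the additivity identity \eqref{eq:draft-additivity} to get $\Amin(X)=\Amin(Y)$. For equality of the bodies you use the set identity $Y\cup e_{yz}=X\cup e_{yx}\cup e_{xz}$, valid because $x\in e_{yz}$. Your added remarks on the per-pair decoupling of the minimum and on relying on \cite[Theorem~3.6]{meissner_hynd} for the unchanged dual pairs only make explicit what the paper uses implicitly.
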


\begin{proof}
Since $0<t_i<T$ and $\psi$ is strictly increasing,
\[
 \beta_1<\beta\leq\alpha,
 \qquad
 \beta_2<\beta\leq\alpha.
\]
The longest-edge rule \eqref{eq:draft-longest-smoothing} therefore gives
\[
 H(\beta,\alpha)=f(\beta,\alpha),
 \qquad
 H(\beta_i,\alpha)=f(\beta_i,\alpha),\quad i=1,2.
\]
Substitution in \eqref{eq:draft-area-Y}--\eqref{eq:draft-area-X}, followed
by the exact additivity \eqref{eq:draft-additivity}, yields
\[
 \Amin(X)-\Amin(Y)
 =2\bigl[f(\beta,\alpha)
          -f(\beta_1,\alpha)-f(\beta_2,\alpha)\bigr]
 =0,
\]
which is precisely \eqref{eq:draft-case-short-value}.

There is also equality at the level of bodies.  Choose the edge
$e_{yz}$ as a set of centres for $Y$, thereby smoothing its dual
$e_{ab}$.  For $X$, the two shorter edges $e_{yx}$ and $e_{xz}$ are
chosen as centres.  Since
\[
 Y\cup e_{yz}=X\cup e_{yx}\cup e_{xz},
\]
where $x\in e_{yz}$, the complete sets of centres in the two intersections
of unit balls are identical after making the same choices on every
unchanged dual pair.  Hence the resulting Meissner bodies coincide.
\end{proof}

\begin{remark}
When $\beta=\alpha$, the two orientations of the unsplit pair
$(e_{yz},e_{ab})$ give the same minimal area for $Y$.  The subdivided pairs
of $X$ still have unique minimizing orientations because
$\beta_i<\beta=\alpha$.  Choosing $e_{yz}$ as the set of centres for $Y$
is compatible with these orientations and gives the same body as for $X$.
The opposite orientation for $Y$ may give a different body with the same
area.
\end{remark}

\begin{proposition}[The original edge is longer than its dual]
\label{prop:draft-beta-greater-alpha}
If $\beta>\alpha$, then deletion of the dangling point strictly decreases
the optimally smoothed area:
\begin{equation}\label{eq:draft-case-long-strict}
 \Amin(Y)<\Amin(X).
\end{equation}
More explicitly,
\begin{align}
 \Amin(Y)
 &=2\pi-2\bigl[Q+f(\alpha,\beta)\bigr],
 \label{eq:draft-case-long-Y}\\
 \Amin(X)-\Amin(Y)
 &=2\Bigl[f(\alpha,\beta)
       -H(\beta_1,\alpha)-H(\beta_2,\alpha)\Bigr]>0.
 \label{eq:draft-case-long-difference}
\end{align}
\end{proposition}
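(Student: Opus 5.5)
The plan is to reduce the statement to a convexity property of the optimal local summand, viewed as a function of the central angle on the circle of radius $r=\cos\frac\alpha2$. Since $\beta>\alpha$, the longest-edge rule \eqref{eq:draft-longest-smoothing} gives $H(\beta,\alpha)=f(\alpha,\beta)$. This proves \eqref{eq:draft-case-long-Y}. Subtracting \eqref{eq:draft-area-Y} from \eqref{eq:draft-area-X} then gives the expression \eqref{eq:draft-case-long-difference}. So everything reduces to the strict inequality
\[
 H(\beta_1,\alpha)+H(\beta_2,\alpha)<f(\alpha,\beta).
\]

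Define, for $0\le t\le T$,
\[
 g(t)=H(\psi(t),\alpha)=\max\Bigl\{\tfrac{\alpha r t}{2},\,f(\alpha,\psi(t))\Bigr\}.
\]
Here I use \eqref{eq:draft-linear-branch} to identify $f(\psi(t),\alpha)=\alpha r t/2$ for all such $t$. The inputs are admissible, since all angles lie in $[0,\pi/3]$, so $\sin(\alpha/2)\le\cos(\psi(t)/2)$. The target inequality becomes $g(t_1)+g(t_2)<g(T)$ with $t_1+t_2=T$, and $g(0)=0$.

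The key step is to show that $t\mapsto f(\alpha,\psi(t))$ is convex on $[0,T]$. Granting this, $g$ is a maximum of two convex functions and hence convex, and $g(0)=0$. Therefore $g(t)/t$ is nondecreasing on $(0,T]$, which gives
\[
 g(t_i)\le \frac{t_i}{T}\,g(T),\qquad i=1,2.
\]
Summing yields $g(t_1)+g(t_2)\le g(T)=f(\alpha,\beta)$.

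For strictness I would argue as follows. At $t=T$ the strict part of \eqref{eq:draft-longest-smoothing} gives $g(T)=f(\alpha,\beta)>f(\beta,\alpha)=\alpha rT/2$, because $\alpha<\beta$. Let $t_0$ be the parameter with $\psi(t_0)=\alpha$. If some $t_i\le t_0$, then $g(t_i)/t_i=\alpha r/2<g(T)/T$, and this strict inequality propagates to the sum. If both $t_i>t_0$, I need the ratio $f(\alpha,\psi(t))/t$ to be strictly increasing there. This follows from strict convexity, or from a separate strict derivative check, together with $g(t)/t\ge \alpha r/2$.

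The main obstacle is the convexity, or at least the star-shapedness $\frac{d}{dt}\bigl[f(\alpha,\psi(t))/t\bigr]>0$, of
\[
 t\mapsto \psi(t)\cos\tfrac{\psi(t)}2\,\arcsin\Bigl(\tfrac{\sin(\alpha/2)}{\cos(\psi(t)/2)}\Bigr),\qquad \psi(t)=2\arcsin\bigl(r\sin\tfrac t2\bigr),
\]
on the range where $\alpha\le\psi(t)\le\pi/3$. I would first try the weaker star-shapedness property, since it is all that is needed. I would write $u=\psi(t)/2$ and use $\sin u=r\sin(t/2)$ to express everything in $u$. I would then verify the sign of $t\,\partial_t F-F$ using the bounds $u,\alpha/2\le\pi/6$. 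If a clean sign argument fails, I would fall back on reducing it to a one-variable inequality on a compact parameter box and handling it as in \cite[Appendix~A]{bogosel_Meissner}.
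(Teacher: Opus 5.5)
\textbf{Verdict: genuine gap.} Your reduction to $H(\beta_1,\alpha)+H(\beta_2,\alpha)<f(\alpha,\beta)$ is sound, and so is the idea of deducing it from superadditivity of $g(t)=H(\psi(t),\alpha)$. However, the step you single out as key is false.

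\textbf{The false step.} The function $g_\alpha(t)=f(\alpha,\psi(t))$ is not convex on $[0,T]$, nor on any interval $[0,\varepsilon]$.
\begin{itemize}
\item As $\psi\to0$ we have $\psi(t)\sim rt$ and $f(\alpha,\psi)\sim\alpha\psi/2$. Hence $g_\alpha(0)=0$ and $g_\alpha'(0)=\alpha r/2$, so $g_\alpha$ is tangent at $0$ to the linear branch $\alpha rt/2$.
\item Yet the strict longest-edge rule gives $g_\alpha(t)<\alpha rt/2$ for $0<t<\tau$, where $\psi(\tau)=\alpha$.
\item A convex function lies above its tangent lines. So $g_\alpha''$ takes negative values arbitrarily close to $0$, and the ``maximum of two convex functions'' argument collapses.
\end{itemize}
The same picture shows that $g_\alpha(t)/t$ is not monotone on $(0,\tau)$: it starts at $\alpha r/2$, dips below it, and returns to $\alpha r/2$ at $\tau$. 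Your final paragraph does move to the range $\alpha\leq\psi(t)\leq\pi/3$, which is the right one. But any sign check of $t g_\alpha'-g_\alpha$ must then use the lower bound $u\geq\alpha/2$. The upper bounds $u,\alpha/2\leq\pi/6$ that you list cannot suffice.

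\textbf{How to repair it.} Work only on the active range $[\tau,T]$. Suppose $g_\alpha$ is convex there.
\begin{itemize}
\item Since $g_\alpha(t)\geq\alpha rt/2$ on $[\tau,T]$ with equality at $\tau$, its right derivative at $\tau$ is at least $\alpha r/2$.
\item So the glued function $g$ (linear on $[0,\tau]$, equal to $g_\alpha$ on $[\tau,T]$) is convex on $[0,T]$ with $g(0)=0$.
\item Your chord argument then goes through.
\item Strictness follows because equality would force $g$ to meet its chord at an interior point, hence to be linear on $[0,T]$. That contradicts $g(T)/T>\alpha r/2$.
\end{itemize}

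\textbf{What is still missing.} The convexity on $[\tau,T]$ (or your star-shapedness variant) is the whole analytic content of the proposition, and your proposal only announces a plan for it. The paper proves it by writing $g_\alpha''(t)=\tfrac12\cos v\,(v+2q)\,h_v(z)$ and showing $h_v\geq0$. The condition $\psi(t)\geq\alpha$ enters exactly as $z\leq q$, which allows $F(z)$ to be replaced by $F(q)$. Explicit bounds in $x=q^2\leq1/3$ then finish the argument.

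\textbf{Comparison with the paper.} Once that lemma is available, your route is leaner than the paper's. Superadditivity of the glued convex $g$ handles the paper's cases (i)--(iii) at once. It also makes the strict monotonicity of $D_\alpha$, with its mixed-derivative computation, unnecessary.
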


\begin{proof}
The identity \eqref{eq:draft-case-long-Y} follows immediately from
$\beta>\alpha$, the longest-edge rule, and
\eqref{eq:draft-area-Y}.  We prove the strict inequality in
\eqref{eq:draft-case-long-difference}.

There are three exhaustive cases.

\smallskip
\noindent\emph{(i) Both subedges satisfy $\beta_i\leq\alpha$.}
The longest-edge rule and \eqref{eq:draft-additivity} give
\[
 H(\beta_1,\alpha)+H(\beta_2,\alpha)
 =f(\beta_1,\alpha)+f(\beta_2,\alpha)
 =f(\beta,\alpha).
\]
But $\beta>\alpha$, so strictness in
\eqref{eq:draft-longest-smoothing} gives
\[
 H(\beta,\alpha)=f(\alpha,\beta)>f(\beta,\alpha).
\]

\smallskip
\noindent\emph{(ii) Exactly one subedge is longer than $e_{ab}$.}
Assume without loss of generality that
$\beta_1\leq\alpha<\beta_2$.  Define
\begin{equation}\label{eq:draft-switching-gain}
 D_\alpha(t)=f(\alpha,\psi(t))-f(\psi(t),\alpha).
\end{equation}
This is the gain in the maximized local summand obtained by switching the
smoothing choice for edges with lengths $\psi(t)\geq\alpha$; the corresponding
decrease in area is $2D_\alpha(t)$.  We first note that $D_\alpha$ is strictly
increasing on this range.  If
$0<s<t\leq\pi/3$, direct differentiation of
\eqref{eq:draft-f-definition} gives
\begin{equation}\label{eq:draft-mixed-derivative-difference}
 \frac{\partial^2 f}{\partial s\,\partial t}(s,t)
 -\frac{\partial^2 f}{\partial s\,\partial t}(t,s)
 =\frac{\sin(s/2)\sin(t/2)}
 {8\bigl(1-\sin^2(s/2)-\sin^2(t/2)\bigr)^{3/2}}
   \bigl(t\sin s-s\sin t\bigr)>0.
\end{equation}
The last inequality follows because $u\mapsto\sin u/u$ is strictly
decreasing on $(0,\pi)$.  The derivative with respect to $s$ of
\[
 \frac{\partial f}{\partial t}(s,t)
 -\frac{\partial f}{\partial s}(t,s)
\]
is the left-hand side of
\eqref{eq:draft-mixed-derivative-difference}, while
\[
 \frac{\partial f}{\partial t}(0,t)
 =\frac{\partial f}{\partial s}(t,0)=0.
\]
Integration therefore gives
\begin{equation}\label{eq:draft-derivative-comparison}
 \frac{\partial f}{\partial t}(s,t)
 >
 \frac{\partial f}{\partial s}(t,s),
 \qquad 0<s<t\leq\frac\pi3.
\end{equation}
Since $\psi'(t)>0$, differentiation of
\eqref{eq:draft-switching-gain} and
\eqref{eq:draft-derivative-comparison} gives
\begin{equation}\label{eq:draft-D-increasing}
 D_\alpha'(t)
 =\left[
   \frac{\partial f}{\partial t}(\alpha,\psi(t))
   -\frac{\partial f}{\partial s}(\psi(t),\alpha)
  \right]\psi'(t)>0
\end{equation}
whenever $\psi(t)>\alpha$.  Also $D_\alpha(t)=0$ when
$\psi(t)=\alpha$.

The longest-edge rule and additivity now give
\begin{align*}
 H(\beta_1,\alpha)+H(\beta_2,\alpha)
 &=f(\beta,\alpha)+D_\alpha(t_2),\\
 H(\beta,\alpha)&=f(\beta,\alpha)+D_\alpha(T).
\end{align*}
Since $x$ is an interior point, $t_2<T$, and hence
\[
 f(\beta,\alpha)+D_\alpha(t_2)
 <f(\beta,\alpha)+D_\alpha(T).
\]
The same argument covers the symmetric alternative.

\smallskip
\noindent\emph{(iii) Both subedges satisfy $\beta_i>\alpha$.}
We continue to use $D_\alpha$ and its strict monotonicity established in
case~(ii).  On the active range $\psi(t)\geq\alpha$, define
\begin{equation}\label{eq:draft-galpha}
	g_\alpha(t)=f(\alpha,\psi(t)).
\end{equation}
We claim that $g_\alpha$ is convex on this range.

Put
\[
v=\frac{\psi(t)}2,\qquad
q=\tan v,\qquad
z=\frac{\sin(\alpha/2)}{\cos v}.
\]
Since $\psi(t)\geq\alpha$ and $\psi(t)\leq\pi/3$, we have
\begin{equation}\label{eq:draft-zq-range}
	0<z\leq q\leq\frac1{\sqrt3}.
\end{equation}
Indeed, $v\geq\alpha/2$ gives
$\sin(\alpha/2)\leq\sin v$, and hence $z\leq\tan v=q$.

By \eqref{eq:draft-psi},
\[
\sin v=\cos\frac{\alpha}{2}\sin\frac t2,
\]
and therefore
\begin{equation}\label{eq:draft-v-derivative}
	\frac{dv}{dt}=\frac12\sqrt{1-z^2}.
\end{equation}
Moreover, from the definition of $f$,
\[
g_\alpha(t)
=2v\cos v\,\arcsin z.
\]
A direct differentiation, using \eqref{eq:draft-v-derivative}, gives
\begin{equation}\label{eq:draft-g-second-derivative}
	g_\alpha''(t)
	=\frac12\cos v\,(v+2q)\,h_v(z),
\end{equation}
where
\begin{equation}\label{eq:draft-h-function}
	h_v(z)
	=z\sqrt{1-z^2}+(B_vz^2-1)\arcsin z,
	\qquad
	B_v=\frac{q+(1+q^2)v}{v+2q}.
\end{equation}
Thus it remains to prove that $h_v(z)\geq0$ on the range
\eqref{eq:draft-zq-range}.

For fixed $v$, one has $h_v(0)=0$, while differentiation of
\eqref{eq:draft-h-function} gives
\begin{equation}\label{eq:draft-h-derivative}
	h_v'(z)
	=
	\frac{z^2}{\sqrt{1-z^2}}
	\left[
	B_v\bigl(1+2F(z)\bigr)-2
	\right],
	\qquad
	F(z)=\frac{\sqrt{1-z^2}\,\arcsin z}{z}.
\end{equation}
The function $F$ is decreasing on $(0,1)$.  Indeed, writing
$z=\sin\theta$ gives
\[
F(z)=\theta\cot\theta,
\]
and
\[
\frac{d}{d\theta}(\theta\cot\theta)
=
\frac{\sin\theta\cos\theta-\theta}{\sin^2\theta}<0.
\]
Since $z\leq q$, we therefore have
\[
F(z)\geq F(q).
\]
Consequently, by \eqref{eq:draft-h-derivative}, it is enough to prove
\begin{equation}\label{eq:draft-B-condition}
	B_v\bigl(1+2F(q)\bigr)\geq2.
\end{equation}

Set
\[
x=q^2\in\left(0,\frac13\right],
\qquad
a=\frac{v}{q}=\frac{\arctan q}{q}.
\]
Then
\[
B_v=\frac{1+(1+x)a}{a+2}.
\]
The right-hand side is increasing as a function of $a$, since
\[
 \frac{d}{da}\frac{1+(1+x)a}{a+2}
 =\frac{1+2x}{(a+2)^2}>0,
\]
and
\[
a
=\int_0^1\frac{ds}{1+xs^2}
\geq1-\frac{x}{3}.
\]
Hence
\begin{equation}\label{eq:draft-B-lower}
	B_v
	\geq
	\frac{6+2x-x^2}{9-x}.
\end{equation}
On the other hand,
\[
\frac{\arcsin q}{q}
=
\int_0^1\frac{ds}{\sqrt{1-xs^2}}
\geq1+\frac{x}{6},
\]
and therefore
\[
F(q)
=
\sqrt{1-x}\,\frac{\arcsin q}{q}
\geq
\sqrt{1-x}\left(1+\frac{x}{6}\right).
\]
For $0\leq x\leq1/3$,
\[
(1-x)\left(1+\frac{x}{6}\right)^2
-
\left(1-\frac{x}{2}\right)^2
=
\frac{x(12-20x-x^2)}{36}
\geq0.
\]
Both sides being nonnegative, it follows that
\begin{equation}\label{eq:draft-F-lower}
	F(q)\geq1-\frac{x}{2}.
\end{equation}
Combining \eqref{eq:draft-B-lower} and \eqref{eq:draft-F-lower}, we obtain
\begin{align*}
	B_v\bigl(1+2F(q)\bigr)
	&\geq
	\frac{6+2x-x^2}{9-x}(3-x),\\
	\frac{6+2x-x^2}{9-x}(3-x)-2
	&=
	\frac{x(x^2-5x+2)}{9-x}.
\end{align*}
Since $0<x\leq1/3$ and $2x-5<0$ on this interval, we have
\[
x^2-5x+2
\geq
\frac19-\frac53+2
=
\frac49>0,
\]
condition \eqref{eq:draft-B-condition} follows.  Thus
\eqref{eq:draft-h-derivative} gives $h_v'(z)\geq0$, and since
$h_v(0)=0$, we have $h_v(z)\geq0$.  By
\eqref{eq:draft-g-second-derivative},
\[
g_\alpha''(t)\geq0,
\]
so $g_\alpha$ is convex on the active range.

We now apply this convexity to the splitting.  Let $\tau>0$ be
determined by
\begin{equation}\label{eq:draft-tau}
	\psi(\tau)=\alpha.
\end{equation}
Since both $\beta_i=\psi(t_i)$ are larger than $\alpha$, we have
$t_i>\tau$, and hence
\begin{equation}\label{eq:draft-T-two-tau}
	T>2\tau.
\end{equation}
On the closed two-sided active interval $[\tau,T-\tau]$, define
\begin{equation}\label{eq:draft-J}
	J(s)
	=
	g_\alpha(s)+g_\alpha(T-s)
	=
	f(\alpha,\psi(s))+f(\alpha,\psi(T-s)).
\end{equation}
Both arguments of $g_\alpha$ lie in its active range on this interval.
Since $g_\alpha$ is convex, so is $J$.  Hence the maximum of $J$ on
$[\tau,T-\tau]$ is attained at an endpoint.  The two endpoint values agree
by the symmetry $J(s)=J(T-s)$.  For the given splitting,
$t_1,t_2>\tau$, and the longest-edge rule gives
\[
H(\beta_1,\alpha)+H(\beta_2,\alpha)
=J(t_1)
\leq J(\tau).
\]
Using \eqref{eq:draft-tau}, the definition of $D_\alpha$, and the
additivity identity \eqref{eq:draft-additivity}, we find
\begin{align*}
	J(\tau)
	&=
	f(\alpha,\psi(\tau))
	+f(\alpha,\psi(T-\tau))\\
	&=
	f(\psi(\tau),\alpha)
	+f(\psi(T-\tau),\alpha)
	+D_\alpha(T-\tau)\\
	&=
	f(\beta,\alpha)+D_\alpha(T-\tau).
\end{align*}
By \eqref{eq:draft-T-two-tau},
\[
\tau<T-\tau<T.
\]
Strict monotonicity of $D_\alpha$ on the active range therefore yields
\[
J(\tau)
=
f(\beta,\alpha)+D_\alpha(T-\tau)
<
f(\beta,\alpha)+D_\alpha(T)
=
f(\alpha,\beta)
=
H(\beta,\alpha).
\]
Consequently,
\[
H(\beta_1,\alpha)+H(\beta_2,\alpha)
<
H(\beta,\alpha)
=
f(\alpha,\beta).
\]

In all three cases,
\[
H(\beta_1,\alpha)+H(\beta_2,\alpha)
<
H(\beta,\alpha)
=
f(\alpha,\beta).
\]
Substitution into
\eqref{eq:draft-area-Y}--\eqref{eq:draft-area-X}
proves \eqref{eq:draft-case-long-difference} and therefore
\eqref{eq:draft-case-long-strict}.
\end{proof}

Combining Propositions~\ref{prop:draft-beta-at-most-alpha}--
\ref{prop:draft-beta-greater-alpha} gives
\begin{equation}\label{eq:draft-final-deletion-comparison}
 \boxed{\Amin(X)\geq\Amin(X\setminus\{x\}),}
\end{equation}
with equality exactly when $\theta(yz)\leq\theta(ab)$.  Since every body
involved has constant width one, Blaschke's relation
\eqref{eq:blaschke-relation} gives the same comparison and equality
characterization for volume.

\begin{corollary}[dangling-free reduction]
Successively deleting dangling points from an extremal generating set
produces an extremal generating set without dangling points and with no
larger optimal area.  Consequently, if the minimum over all Meissner
polyhedra generated by extremal finite sets is attained, it is attained by
one having a generating set without dangling points.
\end{corollary}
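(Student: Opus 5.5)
The corollary follows by induction on the number of points, with the one-step comparison \eqref{eq:draft-final-deletion-comparison} as the inductive step.

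\emph{First part.} Start with an extremal set $X_m$. If it contains no dangling point, there is nothing to do. Otherwise pick a dangling point $x$ and set $X_{m-1}=X_m\setminus\{x\}$.
\begin{itemize}
\item By \eqref{eq:deletion-extremal-count}, $X_{m-1}$ is again extremal.
\item By Propositions~\ref{prop:draft-beta-at-most-alpha}--\ref{prop:draft-beta-greater-alpha}, $\Amin(X_{m-1})\leq\Amin(X_m)$.
\end{itemize}
Iterate this. The cardinality strictly decreases at each step, so the process must stop. It stops precisely when the current set has no dangling point.

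One point needs checking: the comparison was stated under $m\geq5$, and we need to know we never reach a stage with a dangling point when $m=4$. An extremal set with $4$ points has $e=6$ diametric pairs, i.e.\ it is a regular unit tetrahedron. Every vertex then has valence $3$, so no dangling point exists. Also, each extremal set with $m\geq5$ that has a dangling point deletes down to an extremal set with $m-1\geq4$ points, so the hypotheses hold at every step. Chaining the inequalities gives $\Amin(\text{core})\leq\Amin(X_m)$. The core depends on the order of deletions, but any order works.

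\emph{Second part.} Suppose the infimum of $\Haus^2(\partial M)$ over all Meissner polyhedra from extremal sets is attained by some $M\in\Mcal(X)$. Then $\Haus^2(\partial M)=\Amin(X)$, since $\Amin(X)\leq\Haus^2(\partial M)$ and minimality of $M$ gives the reverse. Let $X^\ast$ be the dangling-free core of $X$. Then $\Amin(X^\ast)\leq\Amin(X)$ equals the global minimum, so equality holds. Any minimizer $M^\ast\in\Mcal(X^\ast)$ realizing $\Amin(X^\ast)$, which exists by finiteness, is therefore a global minimizer with a dangling-free generating set.

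The only step needing care is the base case about $m=4$ and the consistency of the hypotheses along the chain; everything else is bookkeeping on top of the already proved one-step inequality.
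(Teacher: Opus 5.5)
Your proof is correct and follows the paper's argument essentially step for step. Each deletion preserves extremality and does not increase $\Amin$ by the one-step comparison. Termination follows because the cardinality decreases, and a four-point extremal set (a regular tetrahedron, every valence $3$) has no dangling point. The global-minimum statement then follows by comparing $\Amin$ of the core with that of the original set. Your explicit check that the hypothesis $m\geq5$ holds at every deletion step is a small but welcome addition that the paper leaves implicit.
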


\begin{proof}
Each deletion preserves extremality and, by
\eqref{eq:draft-final-deletion-comparison}, does not increase $\Amin$.
The process terminates because the number of points decreases at every
step.  An extremal set with four points has all
$2\cdot4-2=\binom42$ diametric pairs, so it has no dangling point.  Thus
the terminal set is dangling-free.  If the initial set realizes the global
minimum, the terminal set cannot have smaller area and therefore realizes
the same minimum.
\end{proof}

\section{Conclusions and perspectives}

The main result of this paper shows that inserting dangling points into an
extremal generating set cannot decrease the minimum area obtainable by
Meissner smoothing.  This treats one of the simplest natural enrichments of
an extremal generating set.  We state some natural generalizations that may
be useful in studying the conjectured volume minimality of the Meissner
tetrahedra.

The first conjecture concerns arbitrary inclusions of extremal finite sets. Compared to the results of this paper, where a single dangling point is added to an existing extremal set, adding multiple points may yield more complex combinatorics of diameter graphs. Excluding such possibilities requires different approaches than the one presented in this paper. 

\begin{conj}\label{conj:monotone}
	Let $Y\subsetneq X$ be two extremal finite sets of unit diameter. Then
	\[
	\Amin(Y)\leq\Amin(X).
	\]
	The optimal Meissner area is nondecreasing with respect to inclusion
	of the generating set.
\end{conj}

The density of Meissner polyhedra among bodies of constant width motivates
the following constrained minimization problem.  For an extremal finite set
$X$ of unit diameter, denote by $\mathcal{CW}_1(X)$ the class of convex
bodies of constant width one that contain $X$.

\begin{conj}\label{conj:containMeissner}
	The minimum surface area over $\mathcal{CW}_1(X)$ is attained by a
	Meissner polyhedron based on $X$.
\end{conj}
By Blaschke's relation, the corresponding volume statement is equivalent.
The following special case is also open.
\begin{conj}\label{conj:containTetra}
	Let $T$ be the vertex set of a regular tetrahedron of side length one.
	Then the minimum surface area over $\mathcal{CW}_1(T)$ is attained by a
	Meissner tetrahedron.
\end{conj}

%\noindent{\bf Acknowledgements:} The author was partially supported by the ANR Project: STOIQUES.

\bibliographystyle{abbrv}
\bibliography{biblio}

\noindent {\bf Beniamin Bogosel}

\noindent Faculty of Exact Sciences, Aurel Vlaicu University of Arad\\
 2 Elena Dr\u agoi Street, Arad, Romania
 
\noindent Email address: \texttt{beniamin.bogosel@uav.ro}

\end{document}